\documentclass{amsart}
\usepackage{amsmath}
\usepackage{graphicx}
\usepackage{latexsym}
\usepackage{color}
\usepackage{amscd}
\usepackage[all]{xy}
\usepackage{enumerate}
\usepackage{hyperref}
\usepackage{subfigure}
\usepackage{soul}
\usepackage{comment}
\usepackage{amsthm}
\usepackage{amsfonts}  
\usepackage{amssymb}   
\usepackage{url}

\parskip.05in

\addtolength{\textheight}{-0.1cm}

\newtheorem{thm}{Theorem}

\newtheorem{theorem}[thm]{Theorem}
\newtheorem{lemma}[thm]{Lemma}

\newtheorem{proposition}[thm]{Proposition}

\newtheorem*{mthma}{Theorem~A}
\newtheorem*{mthmb}{Theorem~B}

\theoremstyle{definition}

\newtheorem*{definition*}{Definition}

\newtheorem{remark}[thm]{Remark}

\setlength\arraycolsep{2pt}

\newcommand{\N}{\mathbb{N}}
\newcommand{\Z}{\mathbb{Z}}

\newcommand{\K}{{\rm K3}}

\newcommand{\co}{\mskip0.5mu\colon\thinspace}

\def \x {\times}

\begin{document}

\title[Fillings of genus--$1$ open books and $4$--braids]
{Fillings of genus--$1$ open books and $4$--braids}

\author[R. \.{I}. Baykur]{R. \.{I}nan\c{c} Baykur}
\address{Department of Mathematics and Statistics, University of Massachusetts, Amherst, MA 01003-9305, USA}
\email{baykur@math.umass.edu}

\author[J. Van Horn-Morris]{Jeremy Van Horn-Morris}
\address{Department of Mathematical Sciences, The University of Arkansas, \newline 
\indent Fayetteville, AR 72701,  USA }
\email{jvhm@uark.edu }

\begin{abstract}
We show that there are contact $3$--manifolds of support genus one which admit infinitely many Stein fillings, but do not admit arbitrarily large ones. These Stein fillings arise from genus--$1$ allowable Lefschetz fibrations with distinct homology groups, all filling a fixed minimal genus open book supporting the boundary contact $3$--manifold. In contrast, we observe that there are only finitely many possibilities for the homology groups of Stein fillings of a given contact \mbox{$3$--manifold} with support genus zero. We also show that there are $4$--strand braids which admit infinitely many distinct Hurwitz classes of quasipositive factorizations, yielding in particular an infinite family of knotted complex analytic annuli in the \mbox{$4$--ball} bounding the same transverse link up to transverse isotopy. These realize the smallest possible examples in terms of the number of boundary components a genus--$1$ mapping class and the number of strands a braid can have with infinitely many positive/quasipositive factorizations.
\end{abstract}

\maketitle

\setcounter{secnumdepth}{2}
\setcounter{section}{0}

\section{Introduction} 

Open books have gained a prominent role in contact geometry and low dimensional topology ever since Giroux established a striking correspondence between contact structures and open books on $3$--manifolds \cite{Gi2}. It is desirable to deduce contact geometric information from coarse topological invariants of supporting open books. The minimal genus for a supporting open book, called the \emph{support genus} of a contact $3$--manifold $(Y, \xi)$ \cite{EtnyreOzbagci}, is such an invariant. Thanks to works of Etnyre, Wendl, and several others, quite a lot is now known about contact \mbox{$3$--manifolds} with support genus zero, as well as their symplectic and Stein fillings (e.g. \cite{Etnyre, Wendl, PV, Kaloti}). In particular, it is known that not all contact structures can be supported by planar open books \cite{Etnyre}. However, 15 years after Giroux, it is still an open question whether there are contact structures which cannot be supported by genus--$1$ open books. 

Naturally, one would like to see if there are any distinguishing properties for contact $3$--manifolds that can be supported by genus--$1$ open books, perhaps similar to those known to hold for planar ones. There are a few intimately related aspects we will consider in this note: to date, there are no known examples of planar open books (resp. planar contact structures) that can be filled by infinitely many allowable Lefschetz fibrations (resp. Stein surfaces), whereas there are many examples of higher $g \geq 2$ open books with such fillings (e.g. \cite{OS, DKP, BMV}). On the other hand no genus--$0$ or genus--$1$ open book can be filled by allowable Lefschetz fibrations with \emph{arbitrarily large} Euler characteristic (see, for example, \cite{BMV}), whereas many examples have been found in recent years again for $g \geq 2$ open books \cite{BV1, BV2, DKP, BMV}. Notably, the Euler characteristic of fillings of a contact $3$--manifold with support genus zero is also bounded \cite{P, Kaloti}, but this property is not known to extend to support genus one. (If it did, this would provide an obstruction for contact $3$--manifolds with arbitrarily large Sten fillings to admit genus--$1$ open books.)
In this note, we will explore each of these types of fillings. We prove the following:

\begin{mthma}
There are (infinitely many) contact $3$--manifolds with support genus one, each one of which admits infinitely many homotopy inequivalent Stein fillings, but do not admit arbitrarily large ones. These are all supported by genus--$1$ open books, each bounding infinitely many distinct genus--$1$ allowable Lefschetz fibrations on their Stein fillings with infinitely many distinct homology groups. 
\end{mthma}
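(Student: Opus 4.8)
The plan is to construct an explicit infinite family of genus--$1$ allowable Lefschetz fibrations (ALFs) whose boundary open books are all equivalent, so that they fill a single fixed contact $3$--manifold, yet whose total spaces have pairwise distinct homology. Since a genus--$1$ surface with some number $b$ of boundary components has a mapping class group whose structure is well understood, I would work in $\map$ for $\Sigma$ of genus one. The key idea is to exploit a relation in this mapping class group that converts one product of right-handed Dehn twists into another while keeping the boundary multitwist (and hence the supported open book) unchanged. Concretely, I would look for a family of positive factorizations $W_n$ of a fixed mapping class $\phi \in \map$, indexed by $n \in \N$, such that the corresponding Lefschetz fibrations $X_n$ have $b_2(X_n)$ or torsion in $H_1$ varying with $n$. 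Relations of ``lantern'' or ``chain'' type, or substitutions coming from the hyperelliptic/elliptic structure of the once- or twice-punctured torus, are the natural engine here.

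\smallskip

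The execution would proceed in the following steps. First, I would fix the base open book: choose the genus--$1$ page $\Sigma$ with the appropriate number of boundary components and write down a monodromy $\phi$ as a product of positive Dehn twists, ensuring that the boundary fractional Dehn twist coefficients are positive so that the supported contact structure is well-defined and Stein-fillable. Second, I would produce the infinite family of distinct positive factorizations of $\phi$; these give genus--$1$ ALFs $X_n$ by the standard Lefschetz fibration construction, and each $X_n$ carries a Stein structure filling $(Y,\xi)$ by the Loi--Piergallini / Akbulut--Ozbagci correspondence. Third, I would compute $H_*(X_n)$ from the vanishing cycles via the standard presentation of the homology of a Lefschetz fibration (the first homology is the cokernel of the intersection/relation matrix built from the vanishing cycles, and the second Betti number is read off from the number of critical points minus the rank contributions), and verify that these groups are pairwise non-isomorphic along the family, which forces the fillings to be pairwise homotopy inequivalent. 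Fourth, I would establish the upper bound: because the page has genus one, a Euler-characteristic count combined with the adjunction/topological constraints (as in \cite{BMV}) shows that $\chi(X_n)$ cannot grow without bound, giving the ``no arbitrarily large fillings'' clause. Finally, to get infinitely many \emph{distinct} contact $3$--manifolds as asserted by the parenthetical, I would vary the base monodromy $\phi$ over an infinite family distinguished by a classical contact invariant.

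\smallskip

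The main obstacle I anticipate is the simultaneous control of two competing requirements: the factorizations must be genuinely different at the level of total-space homology (so the fillings are distinguished), yet they must all project to the \emph{same} boundary open book up to the equivalence generated by stabilization and conjugation/Hurwitz moves (so that the filled contact manifold is literally fixed). Ensuring that the family of factorizations changes $H_*(X_n)$ while leaving the boundary multitwist invariant is delicate, because many mapping-class-group substitutions that alter the homology also alter the boundary monodromy. I expect the crux of the argument to be the verification of a specific relation in $\map$ for the genus--$1$ page that accomplishes this, together with the homology computation showing the groups $H_1(X_n)$ (or their torsion) are distinct; establishing the boundedness of $\chi(X_n)$ should be comparatively routine given the genus--$1$ constraint and the cited results.
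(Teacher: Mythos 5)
Your construction of the fillings themselves is essentially the paper's: a fixed genus--$1$ monodromy with infinitely many positive factorizations obtained by a substitution that preserves the boundary word, distinguished by $H_1$ of the total spaces. (The paper's engine is a partial conjugation: writing $\phi = t_{b_1}t_{b_2}t_{b_3}t_{b_2}$ on $\Sigma_1^3$, where $t_{b_1}t_{b_2}t_{b_3}$ equals $t_{a_1}^{-3}t_{a_2}^{-3}t_{a_3}^{-3}t_{\delta_1}t_{\delta_2}t_{\delta_3}$ by the star relation and hence commutes with $t_{a_1}$, one conjugates only the last factor to get $P_n$ with $H_1(X_n)=\Z\oplus\Z/n\Z$. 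This is exactly the kind of relation you predicted would be the crux.)

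However, your step four contains a genuine gap. Bounding the Euler characteristic of genus--$1$ allowable Lefschetz fibrations that fill the \emph{given open book} does not bound the size of all Stein fillings of the \emph{contact manifold}: by Loi--Piergallini an arbitrary Stein filling is an allowable Lefschetz fibration whose boundary open book supports $\xi$ but need not be the genus--$1$ open book you started with --- it could have much larger genus and many more boundary components, and the \cite{BMV}-type count gives you nothing there. Indeed the paper stresses that boundedness of fillings is known for support genus zero but is an \emph{open problem} for support genus one, so this clause is precisely the nontrivial content of the theorem. The paper's route is entirely different: it caps off the binding to embed every filling into an elliptic fibration on the ${\rm K3}$ surface, so $(Y,\xi)$ admits a Calabi--Yau cap, and then invokes Li--Mak--Yasui to bound the Betti numbers of all Stein (indeed minimal symplectic) fillings. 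Two smaller gaps: (i) you never rule out that your contact manifolds have support genus \emph{zero}; the paper does this by proving that planar contact manifolds admit only finitely many homology groups among their fillings (Proposition~\ref{planarfillings}), which your infinite $H_1$ family then contradicts; (ii) ``vary the base monodromy over an infinite family'' is not enough as stated --- for each new contact manifold you must re-establish both the infinitude of fillings and the boundedness, which the paper arranges by Murasugi-summing with planar open books on $S^1\times S^2$ and using Eliashberg's splitting of fillings of connected sums.
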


\noindent Many earlier examples of contact $3$--manifolds with infinitely many Stein fillings (e.g. \cite{OS, AEMS}) have used higher genera open books, which also admit arbitrarily large Stein fillings by \cite{BMV, DKP}. Examples of contact $3$--manifolds with support genus one and admitting infinitely many Stein fillings were given by Yasui in \cite{Yasui}, where logarithmic transforms were used to produce exotic fillings (see Remark~\ref{YasuiExotic}). Our theorem provides the first examples which illustrate that the two classes of contact $3$--manifolds are different. 

In the course of proving the above theorem, we will observe that fillings of any given planar contact $3$--manifold not only have restricted Euler characteristics, but also they can have only finitely many \emph{homology groups} (Proposition~\ref{planarfillings}). In contrast, we will produce infinitely many factorizations of genus--$1$ mapping classes into only \emph{four} positive Dehn twists, which induce homologically distinct allowable Lefschetz fibrations (Lemma~\ref{keyexample}). Two ingredients in this simple construction are suitable \emph{partial conjugations} which effectively change the homology (e.g. \cite{OS, BK, B}), and the explicit monodromy of a genus--$1$ open book with three binding components supporting the standard Stein fillable contact structure on $T^3$ (Lemma~\ref{keyrelation} and Proposition~\ref{3torusOB}; also \cite{VHMthesis}). The small topology of these fillings will then allow us to show that many of these contact $3$--manifolds admit \emph{symplectic Calabi-Yau caps} introduced in \cite{LiMakYasui}, so as to conclude that they can \emph{not} admit arbitrarily large Stein fillings (Theorem~\ref{mainthm1}). \linebreak We moreover observe that there are contact $3$--manifolds supported by planar \emph{spinal open books} \cite{LVW, BV1} which also admit infinitely many Stein fillings, but do not admit arbitrarily large ones (Remark~\ref{spinal}).

\smallskip
The second part of our article concerns fillings of \emph{quasipositive links}, which are closures of quasipositive braids. By the pioneering works of Rudolph \cite{Rudolph} and Boileau and Orevkov \cite{BO}, these links are characterized as oriented boundaries of smooth pieces of complex analytic curves in the unit $4$--ball ---which can be realized as \emph{braided surfaces}. An intimate connection between fillings of contact $3$--manifolds and quasipositive braids is provided by Loi and Piergallini \cite{LP}, who showed that every Stein filling of a contact $3$--manifold is a branched cover of the Stein $4$--ball along a braided surface. 

We will provide infinitely many complex curves in the $4$--ball filling the same transverse links in the boundary $3$--sphere:

\begin{mthmb}
There are (infinitely many) elements in the $4$--strand braid group, each of which admits infinitely many quasipositive braid factorizations up to Hurwitz equivalence. A particular family gives infinitely many complex analytic annuli in the $4$--ball with different fundamental group complements, all filling the same \mbox{$2$-component} transverse link.
\end{mthmb}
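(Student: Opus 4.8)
The plan is to exploit the classical hyperelliptic correspondence realizing the genus--$1$ surface $\Sigma_{1,2}$ as the double cover of the disk $D^2$ branched over four marked points, together with the branched--cover description of Stein fillings due to Loi and Piergallini \cite{LP}. Under this correspondence the symmetric (hyperelliptic) mapping class group of $\Sigma_{1,2}$ maps onto the $4$--strand braid group $B_4$, a positive Dehn twist along a curve covering an embedded arc between two branch points is sent to a positive half--twist (band generator), and hence any positive factorization of a fixed mapping class $\varphi$ of $\Sigma_{1,2}$ into four such Dehn twists descends to a \emph{quasipositive} factorization of a fixed $4$--braid $\beta = \beta(\varphi) \in B_4$ into four band generators. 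First I would pass from the factorizations of Lemma~\ref{keyexample}, which live on the page $\Sigma_{1,3}$ of the $T^3$ open book, to factorizations on $\Sigma_{1,2}$ by capping off the boundary component that none of the four twist curves encircles; the count of branch points is then $4$, since $\chi(\Sigma_{1,2}) = -2 = 2-4$, and the branched cover of $\partial D^2$ splits into two circles, matching the two boundary components of $\Sigma_{1,2}$.

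Next, since every factorization in the resulting family represents the \emph{same} mapping class $\varphi$, all of them descend to one and the same braid $\beta \in B_4$; this yields a single $4$--braid admitting infinitely many quasipositive factorizations into four band generators. To see that these lie in infinitely many distinct Hurwitz classes, I would invoke the fact that the Hurwitz class of a quasipositive factorization determines its braided surface up to isotopy, and thereby determines the double branched cover of the $4$--ball $B^4$ along that surface --- which is exactly the total space $X$ of the corresponding genus--$1$ allowable Lefschetz fibration in the hyperelliptic picture. Because Lemma~\ref{keyexample} guarantees that these total spaces realize infinitely many distinct homology groups, they are pairwise non--diffeomorphic, so the underlying factorizations cannot be Hurwitz equivalent. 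Running the construction over the infinite family of base examples from Theorem~A then produces infinitely many distinct braids, each admitting infinitely many Hurwitz classes of quasipositive factorizations.

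For the second assertion I would note that a factorization of a $4$--braid into exactly four band generators has braided surface $F$ of Euler characteristic $4 - 4 = 0$; being connected with two boundary components and genus zero, $F$ is an annulus, and by Rudolph \cite{Rudolph} and Boileau--Orevkov \cite{BO} the quasipositivity makes $F$ isotopic to a piece of a complex analytic curve. Its oriented boundary $\partial F$ is the closure of the fixed braid $\beta$, a $2$--component transverse link independent of the factorization, so all of these complex analytic annuli fill the same transverse link up to transverse isotopy. Finally, to detect that the annuli are genuinely knotted differently I would read off the fundamental groups $\pi_1(B^4 \setminus F)$: these are invariants of the Hurwitz class, and the first homology of the double branched cover $X$ is determined by $\pi_1(B^4 \setminus F)$ together with the fixed branching data, so the infinitely many distinct homology groups of the $X$ force the groups $\pi_1(B^4 \setminus F)$ to be pairwise non--isomorphic.

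The main obstacle I anticipate is the passage from ``homologically distinct fillings'' to ``genuinely distinct Hurwitz classes'' and ``distinct complements.'' Hurwitz equivalence is extremely flexible, so non--equivalence of factorizations can only be certified through an honest invariant; the crucial point is that the branched double cover, with all of its smooth topology, is such an invariant, and one must verify that the homological separation supplied by Lemma~\ref{keyexample} is not washed out upon restricting to the branch locus and its complement. Setting up the hyperelliptic dictionary with the correct framing and boundary bookkeeping --- so that four positive Dehn twists correspond to four band generators and the branch locus closes up to precisely the advertised $2$--component link --- is the other place where care is required, though this part is essentially classical.
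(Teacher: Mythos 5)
Your overall strategy is the same as the paper's: cap off a boundary component to land in $\Gamma_1^2$, use the hyperelliptic involution with four fixed points to push the positive factorizations down to quasipositive factorizations of a single $4$--braid, realize the Lefschetz fibration total spaces as double branched covers of $D^4$ along the braided surfaces, and use the distinct homologies of those covers to separate the Hurwitz classes and the complements $\pi_1(D^4\setminus S_n)$, with Rudolph supplying complex analyticity. However, there are three concrete gaps. First, your claim that the braided surface $F$ is ``connected with two boundary components and genus zero, hence an annulus'' assumes connectivity, and this genuinely fails for half the family: in the paper's computation the four half-twisted bands connect all four disks only when $n$ is odd, while for $n$ even $S_n$ splits as a punctured torus together with a disjoint disk. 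You must analyze which pairs of branch points the arcs underlying $\hat{b}_1,\hat{b}_2,\hat{b}_3$ and the twisted curve join, and restrict to the odd-$n$ subfamily to get annuli; $\chi(S_n)=0$ alone does not suffice.

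Second, the homology computation of Lemma~\ref{keyexample} cannot simply be quoted after capping off: on $\Sigma_1^2$ one has $\hat{a}_1=\hat{a}_3$, the presentation of $H_1$ changes, and the paper in fact switches the partial conjugation from $t_{a_1}^{n}$ to $t_{\hat{a}_2}^{n}$ and recomputes $H_1(\widehat{X}_n;\Z)=\Z/n\Z$. You need to redo this computation on the capped-off page (and check that the conjugated vanishing cycles remain symmetric under the involution); as written, you are distinguishing the covers by groups computed for a different set of fillings. Third, your mechanism for producing \emph{infinitely many} such $4$--braids --- running over the Theorem~A family --- does not work: those examples are built by Murasugi summing with planar open books and live on genus--$1$ pages with more than two boundary components, so they do not descend to $\mathcal{B}_4$ under the hyperelliptic correspondence. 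The paper instead stays on $\Sigma_1^2$ and appends extra $t_{\hat{b}_2}$ factors to the monodromy, which leaves the homology argument in the double covers intact while changing the braid (and the genus of the filling surfaces).
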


\noindent We will derive these examples from a refined construction of positive factorizations of elements in the mapping class group of a torus with two boundary components. These factorizations will commute with the hyperelliptic involution exchanging the two boundary circles, so they descend to quasipositive factorizations of certain \mbox{$4$--braids} (Theorem~\ref{mainthm2}). In turn, the quasipositive factorizations prescribe braided surfaces, which by the work of Rudolph, can be made complex analytic. The frugality of our factorizations makes it possible to strike remarkably small topology among these examples, such as complex \emph{annuli} filling in the same $2$--component transverse link. 

The \emph{infinite} families we obtain in this paper drastically improve the existing literature on fillings of genus--$1$ open books and $4$--braids. Our examples demonstrate that for each $k \geq 2$, there are elements in the mapping class group of a torus with $k$ boundary components which admit infinitely many distinct factorizations into positive Dehn twists, distinguished by their homologies. For $k=1$, examples admitting a few distinct factorizations were obtained by Auroux in \cite{Auroux} using calculations in $SL(2, \Z)$, whereas examples of quasipositive factorizations of some $3$-- and $4$--braids yielding \emph{two} distinct braided surfaces were obtained in \cite{AurouxEtal, Geng, Auroux} (see Remarks~\ref{AurouxFact} and ~\ref{GengFact}). Moreover, recent work of Orevkov \cite{O1} shows that any $3$--braid admits at most finitely many quasipositive factorizations up to Hurwitz equivalence, and the
same goes for positive factorizations of a mapping class on a torus with one boundary \cite{CadavidEtal}. Hence our examples of infinite families of fillings are optimal in terms of the number of boundary components for genus--$1$ mapping classes and the number of strands for braids.

\vspace{0.1in}
\noindent \textit{Acknowledgements.} The first author was partially supported by the NSF Grant DMS-$1510395$, and the second author by the Simons Foundation Grant $279342$. We would like to thank Stepan Orevkov for pointing out his result in \cite{O1}.

\vspace{0.1in}
\section{Background}

All manifolds in this paper are assumed to be smooth, compact and oriented. 

We denote a genus $g$ surface with $n$ boundary components by $\Sigma_g^n$, and its \emph{mapping class group} by $\Gamma_g^n$. This is the group which consists of orientation-preserving homeomorphisms of $\Sigma_g^n$ that restrict to identity along $\partial \Sigma_g^n$, modulo isotopies of the same type.  We denote by $t_c \in \Gamma_g^n$, the positive (right-handed) \emph{Dehn twist} along the simple closed curve $c \subset \Sigma_g^n$. A factorization in $\Gamma_g^n$
\[
\phi= t_{c_l} \cdots t_{c_1} \]
 is called a \emph{positive factorization} $P$ of $\phi$. Of particular interest to us here are the factorizations where the Dehn twist curves $c_j$ are all homologically essential on $\Sigma_g^n$. Here we use the functorial notation for the product of mapping classes; e.g. a given factorization $P=t_{c_l} \cdots t_{c_1}$ acts on a curve $a$ on $\Sigma_g^n$ first by $t_{c_1}$, then $t_{c_2}$, and so on.  The notation $\phi^{\eta}$ will stand for the conjugate element $\eta^{-1} \phi \, \eta$. Lastly, we note two elementary facts regarding Dehn twists we will repeatedly use: for $\phi= t_c$, $t_c^{\eta}=t_{\eta (c)}$ and if $a$ and $b$ are disjoint curves, $t_a \, t_b= t_b \, t_a$. 

Let $\mathcal{B}_n$ denote the $n$-strand \emph{braid group}, which consists of orientation-preserving homeomorphism of the unit disk $D^2$ fixing setwise $n$ distinguished marked points in the interior, modulo isotopies of the same type.  We denote by $\tau_{\alpha_j}$  the positive (right-handed) half-twist along an arc $\alpha_j$ between two marked points on $D^2$, avoiding the others. A factorization in $\mathcal{B}_n$ 
\[ b= \tau_{\alpha_l} \cdots \tau_{\alpha_1} \]
is then called a \emph{quasipositive factorization} of the braid $b$. A link in the $3$--sphere is said to be \emph{quasipositive} if it can be realized as the closure of a quasipositive braid.

\subsection{Contact $3$--manifolds and supporting open books} \

A \emph{contact structure} on a $3$--manifold $Y$ is a plane field $\xi$, which can be globally written as the kernel of a  \emph{contact form} $\alpha \in \Omega^1(Y)$ with $\alpha \wedge d\alpha\neq 0$. An \emph{open book} on $Y$ consists of $L$, an $n$-component oriented link in $Y$ which is \emph{fibered} with bundle map $h\co Y \setminus L \to S^1$ such that $\partial F_t = L$ for all $F_t=h^{-1}(t)$. Here $L$ is called the \emph{binding}, the surface $F_t \cong \Sigma_g^n$, for any $t$, is called the \emph{page} of the open book, and in this case $h$ is called a \emph{genus--$g$ open book.} An open book is determined up to isomorphism by an element $\phi \in \Gamma_g^n$, called the \emph{monodromy}, which prescribes the return map of a flow transverse to the pages and meridional near the binding. 

A \emph{contact $3$--manifold} $(Y, \xi)$ is said to be \emph{supported by} or \emph{compatible with} an open book $h$ if $\xi$ is isotopic to a contact structure given by a $1$--form $\alpha$ satisfying $\alpha>0$ on the positively oriented tangent planes to $L$ and $d\alpha$ is a volume form on every page. By the works of Thurston and Winkelnkemper \cite{TW} and Giroux \cite{Gi2}, every open book on $Y$ supports some contact structure, and conversely, there are (infinitely many) open books supporting a given contact structure on $Y$. The \emph{support genus} of a contact $3$--manifold $(Y, \xi)$ is then defined to be the smallest possible genus for an open book supporting it. 

\subsection{Stein fillings, allowable Lefschetz fibrations and braided surfaces} \

A special class of contact $3$--manifolds arise as the boundaries of affine complex $4$--manifolds. A \emph{Stein filling} of a contact $3$--manifold $(Y, \xi)$ is a $4$--manifold $(X,J)$ equipped with an affine complex structure $J$ in its interior, where the maximal complex distribution along $\partial X = Y$ is $\xi$. 

A \emph{Lefschetz fibration} on a $4$--manifold $X$ is a map $f\co X \to D^2$, where each critical point of $f$ lies in the interior of $X$ and conforms to the local complex model  $f(z_1,z_2)=z_1 z_2$ under orientation preserving charts. These singularities are obtained by attaching $2$--handles to a regular fiber along simple-closed curves $c_j$, called the \emph{vanishing cycles}. A Lefschetz fibration is said to be \emph{allowable} if the fiber has non-empty boundary and all its vanishing cycles are homologically nontrivial on $F \cong \Sigma_g^n$, $n>0$. Given an allowable Lefschetz fibration $f\co X \to D^2$, if we let $p$ be a regular value in the interior of the base $D^2$, then composing $f$ with the radial projection $D^2 \setminus
\{p\} \to \partial D^2$ we obtain an open book $h\co \partial X \setminus \partial f^{-1}(p) \to S^1$. Importantly, any positive factorization $P$ of an open book monodromy $\phi = t_{c_l} \cdots t_{c_1}$ in $\Gamma_g^n$ prescribes an allowable Lefschetz fibration $f$ on a Stein filling $(X, J)$ of $(Y, \xi)$ supported by this open book,

A \emph{braided surface} in the $4$--ball is an embedded compact surface $S$, along which the canonical projection $\pi: D^4 \cong D^2 \x D^2 \to D^2$ restricts to a simple positive branched covering, where $S$ and $\pi|_S$ conform to the local complex models $w =z^2$ and $\pi(w, z)=z$ around each branch point. Any quasipositive factorization of a braid $b=\tau_{\alpha_l} \cdots \tau_{\alpha_1}$ in $\mathcal{B}_n$ prescribes a braided surface $S$.  

All these geometric objects come together in a beautiful theorem of Loi and Piergallini.
Building on Eliashberg's topological characterization of Stein fillings and the work of Rudolph on braided surfaces, they showed that every Stein filling come from an allowable Lefschetz fibration on $X$ \cite{LP} (also see \cite{AO}, and \cite{BV1} for a further generalization to Lefschetz fibrations over arbitrary compact surfaces with non-empty boundaries), which arise as a branched covering of the Stein $4$--ball along a braided surface $S$. On the other hand, the works of Rudolph \cite{Rudolph} and Boileau and Orevkov \cite{BO} established that quasipositive links are precisely those which are oriented boundaries of smooth pieces of complex analytic curves in the unit $4$--ball, realized as braided surfaces. 

The algebraic topology of a Stein filling $X$ equipped with an allowble Lefschetz fibration is easy to read off from the corresponding positive factorization $P$. In particular we have
\[ \pi_1(X) \cong \, \pi_1(\Sigma_g^n) \, / \, N(c_1, \ldots, c_l) \, , \]
where $N(c_1, \ldots, c_l)$ is the subgroup of $\pi_1(\Sigma_g^n)$ generated normally by $\{c_i\}$. For $\{a_j\}$ chosen generators of $\pi_1(\Sigma_g^n) \cong \Z^{2g+n-1}$, we therefore get
\[ \pi_1(X) \cong \, \langle \, a_1, \ldots, a_{2g+n-1} \, | \, R_1, \ldots, R_l \, \rangle \, , \]
where each $R_i$ is a relation obtained by expressing $c_i$ in  $\{a_j\}$.

\vspace{0.1in}
\section{Preliminary results}
 
We begin with a relation in the genus--$1$ mapping class group, which will play a key role in our constructions to follow.

\begin{lemma} \label{keyrelation}
The following relation holds in the mapping class group $\Gamma_1^3$:
\[
t_{b_1} t_{b_2} t_{b_3} =  t_{a_1}^{-3} t_{a_2}^{-3} t_{a_3}^{-3} t_{\delta_1} t_{\delta_2} t_{\delta_3} \, ,\] where the curves $a_i, b_i, \delta_i$, $i=1, 2, 3$, are as shown in Figure~1.
\end{lemma}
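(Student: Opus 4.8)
The plan is to derive this identity from the classical \emph{star relation} in $\Gamma_1^3$, combined with the two elementary commutation facts for Dehn twists recorded above (disjoint twists commute, and $t_c^{\eta}=t_{\eta(c)}$). The point is that the cubes $t_{a_i}^{-3}$ and the three boundary twists $t_{\delta_1}t_{\delta_2}t_{\delta_3}$ are exactly the signature of a star relation on a torus with three boundary components.

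First I would fix the geometric setup read off from Figure~1. The curves $a_1,a_2,a_3$ should be pairwise disjoint, and there should be a single ``central'' curve $c$ meeting each $a_i$ transversally in one point; a regular neighborhood of $a_1\cup a_2\cup a_3\cup c$ then fills $\Sigma_1^3$, and its three boundary components are $\delta_1,\delta_2,\delta_3$ (an Euler characteristic count confirms such a neighborhood is $\Sigma_1^3$). For this star-shaped configuration the star relation reads
\[
(t_{a_1}t_{a_2}t_{a_3}t_c)^3 = t_{\delta_1}t_{\delta_2}t_{\delta_3},
\]
which is a standard relation and, if desired, can be re-derived inside $\Gamma_1^3$. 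Next I would expand the left-hand side: writing $A=t_{a_1}t_{a_2}t_{a_3}$ and pushing the three $A$-factors in $(At_c)^3=At_c\,At_c\,At_c$ to the left by repeated use of $t_c^{\eta}=t_{\eta(c)}$, a routine rearrangement gives
\[
(At_c)^3 = A^3\, t_{A^2(c)}\, t_{A(c)}\, t_c .
\]
Since $a_1,a_2,a_3$ are pairwise disjoint their twists commute, so $A^3=t_{a_1}^3t_{a_2}^3t_{a_3}^3$, while the boundary twists $t_{\delta_i}$ are central. Identifying $b_3=c$, $b_2=A(c)$, $b_1=A^2(c)$, the star relation becomes $t_{a_1}^3t_{a_2}^3t_{a_3}^3\,t_{b_1}t_{b_2}t_{b_3}=t_{\delta_1}t_{\delta_2}t_{\delta_3}$, and solving for $t_{b_1}t_{b_2}t_{b_3}$ yields the claimed identity (the ordering of the commuting $t_{a_i}^{-3}$ and of the central $t_{\delta_i}$ being immaterial).

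The main obstacle is the geometric identification, not the algebra: one must check from Figure~1 that the three curves $b_1,b_2,b_3$ are precisely the central curve $c$ together with its successive images $A(c)$ and $A^2(c)$ under the multitwist $A=t_{a_1}t_{a_2}t_{a_3}$ (equivalently, that applying $A$ drags $b_{i+1}$ onto $b_i$). This is a curve-tracing verification entirely dependent on the precise picture, and getting the direction of the conjugation (i.e.\ $A$ versus $A^{-1}$) to match the paper's orientation and twisting conventions is where care is needed.

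Should the labeling in Figure~1 fail to line up with these star-relation images directly, the fallback is to prove the relation outright by the Alexander method: cut $\Sigma_1^3$ along a collection of properly embedded arcs into a disk, compute the image of each arc under both $t_{b_1}t_{b_2}t_{b_3}$ and $t_{a_1}^{-3}t_{a_2}^{-3}t_{a_3}^{-3}t_{\delta_1}t_{\delta_2}t_{\delta_3}$, and check agreement up to isotopy rel $\partial$. This reduces the statement to a finite, if tedious, verification and sidesteps any reliance on the exact form of the star relation.
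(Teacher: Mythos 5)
Your proposal is correct and follows essentially the same route as the paper: both start from the star relation $(t_{a_1}t_{a_2}t_{a_3}t_{b_2})^3=t_{\delta_1}t_{\delta_2}t_{\delta_3}$, pull the multitwist $A=t_{a_1}t_{a_2}t_{a_3}$ out of the product using $t_c^{\eta}=t_{\eta(c)}$, centrality of the boundary twists, and commutativity of the disjoint $t_{a_i}$, and then identify $b_1,b_2,b_3$ as the $A$--orbit of a single curve from Figure~1. The only difference is bookkeeping (which $b_i$ is taken as the central curve, and the direction of the conjugation), which you correctly flag as the figure-dependent step.
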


\begin{figure}[ht!] \label{curves}
  \begin{center}
   \includegraphics[width=13cm]{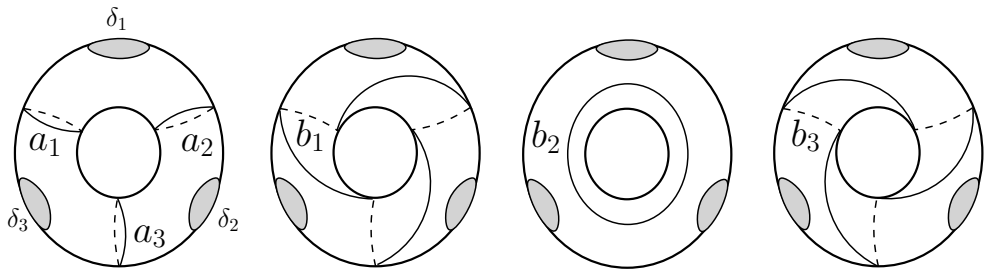}
    \end{center}
   \smallskip
   \caption{Dehn twist curves $a_i$, $b_i$, $\delta_i$ on $\Sigma_1^3$.}
   \end{figure}

\begin{proof}
This follows from the  following relation in $\Gamma_1^3$, known as the \mbox{\emph{star relation} \cite{Gervais}:}
\[ (t_{a_1} t_{a_2} t_{a_3} t_{b_2})^{3} = t_{\delta_1} t_{\delta_2} t_{\delta_3} \, , \]
 which can be derived by applying the lantern relation and braid relations to the well-known $3$-chain relation \cite{KorkmazOzbagci}. For $\Delta= t_{\delta_1} t_{\delta_2} t_{\delta_3}$ the boundary multitwist, and $\eta= t_{a_1} t_{a_2} t_{a_3}$, we can rewrite it as
\begin{align*}
\eta \, t_{b_2} \eta \, t_{b_2} \eta \, t_{b_2} &= \Delta  \\
 (\eta^{-1} \, t_{b_2} \eta) \, t_{b_2} \, (\eta \,  t_{b_2} \eta^{-1})&= \eta^{-3} \Delta \\
t_{\eta^{-1}(b_2)} \, t_{b_2} \, t_{\eta (b_2)} &= \eta^{-3} \Delta \\
t_{b_1} t_{b_2} t_{b_3} &= \eta^{-3} \Delta
\end{align*}
noting that $\Delta$ commutes with $\eta$. Since $a_1, a_2, a_3$ are all disjoint, $\eta^{-3}=  t_{a_1}^{-3} t_{a_2}^{-3} t_{a_3}^{-3}$, which gives us the desired relation. 
\end{proof} 

As we show below, the mapping class above prescribes an important open book. A detailed proof of the next proposition can be found in \cite{VHMthesis}. Here we will give an alternate argument. 

\begin{proposition} \label{3torusOB}
The open book with monodromy \ $\psi = t_{b_1} t_{b_2} t_{b_3}$ supports $(T^3, \xi_{\rm{can}})$, where $\xi_{\rm{can}}$ is the canonical contact structure on the $3$--torus $T^3$, the unit cotangent bundle of the $2$--torus $T^2$. 
\end{proposition}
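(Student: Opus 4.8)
The plan is to treat the underlying smooth $3$--manifold and the supported contact structure separately: for the manifold I would exploit the relation of Lemma~\ref{keyrelation}, and for the contact structure a uniqueness theorem for fillable structures on $T^3$.

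\emph{Identifying the $3$--manifold.} Let $Y$ be the $3$--manifold underlying $(\Sigma_1^3, \psi)$, obtained from the mapping torus $M_\psi$ by gluing a solid torus along each of the three binding components. I would compute $\pi_1(Y)$ from the standard presentation $\pi_1(M_\psi) = \pi_1(\Sigma_1^3) \rtimes_\psi \Z$ together with the three meridional relations coming from the binding fillings. The relation $\psi = \eta^{-3}\Delta$ from Lemma~\ref{keyrelation} streamlines the homological input: since the boundary multitwist $\Delta$ acts trivially on $H_1(\Sigma_1^3)$, we have $\psi_* = (\eta_*)^{-3}$, and $\eta_* = (t_{a_1} t_{a_2} t_{a_3})_*$ is transparent because the $a_i$ are disjoint. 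Carrying this out I expect to obtain $\pi_1(Y) \cong \Z^3$, which forces $Y \cong T^3$ by the classification of closed orientable $3$--manifolds with fundamental group $\Z^3$ (equivalently, by geometrization). A more self-contained alternative is to draw the Kirby diagram of the allowable Lefschetz fibration prescribed by $\psi$ --- four $1$--handles for $\Sigma_1^3 \simeq \vee_4 S^1$ and three $2$--handles along $b_1, b_2, b_3$ with page framing $-1$ --- and to reduce the induced surgery presentation of $Y = \partial X$ to the standard $0$--framed Borromean rings, which is $T^3$.

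\emph{Identifying the contact structure.} The monodromy $\psi = t_{b_1} t_{b_2} t_{b_3}$ is a product of positive Dehn twists along the homologically essential curves $b_i$, so by the discussion in Section~2 the associated allowable Lefschetz fibration over $D^2$ is a Stein filling of the supported contact structure $(Y, \xi)$; in particular $(Y, \xi)$ is Stein fillable, and hence tight. Combining $Y \cong T^3$ with Eliashberg's theorem that the $3$--torus carries a \emph{unique} Stein (holomorphically) fillable contact structure, namely $\xi_{\rm can}$, we conclude $\xi \simeq \xi_{\rm can}$.

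I expect the first step to be the main obstacle, and specifically the bookkeeping of the three meridional fillings. This is delicate because the closed-up monodromy $\bar\psi = t_a^{-9}$ on $T^2$ is a nontrivial parabolic, so the mapping torus by itself is a (non-flat) Nil--manifold; only after correctly incorporating the boundary twists $t_{\delta_i}$ and the binding framings does $Y$ flatten to $T^3$. Getting these framings right --- equivalently, matching the $0$--framings in the Borromean presentation, or ensuring the spurious torsion produced by the $\eta^{-3}$ factor is killed by the fillings --- is where the care is needed, whereas the contact-geometric step is essentially immediate once $Y$ is known.
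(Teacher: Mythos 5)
Your proposal is correct in outline, and your ``self-contained alternative'' is in fact exactly the paper's proof: the authors draw the Kirby diagram of the allowable Lefschetz fibration prescribed by $t_{b_1}t_{b_2}t_{b_3}$ (Figure~2), reduce it by handle slides and cancellations to the standard diagram for $D^2\times T^2$ (equivalently, reduce the induced surgery diagram to one for $T^3$), and then invoke Eliashberg's theorem that $\xi_{\rm can}$ is the unique Stein fillable contact structure on $T^3$ --- the same endgame you propose. Your \emph{primary} route is genuinely different: identify $Y$ by computing $\pi_1(Y)\cong\Z^3$ and appeal to the classification of closed $3$--manifolds with that fundamental group. This is viable but considerably heavier than what it replaces: the classification step rests on geometrization (irreducibility plus Waldhausen rigidity, or the full Poincar\'e--geometrization package), whereas the Kirby calculus is elementary. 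Moreover, the part you flag as streamlined by Lemma~\ref{keyrelation} is only the \emph{homological} shadow of the computation; after filling the binding, $\pi_1(Y)$ is the quotient of the free group $\pi_1(\Sigma_1^3)\cong F_4$ by the relations $\psi_\#(\gamma)=\gamma$, and showing this non-abelian quotient is $\Z^3$ (i.e.\ that the relevant commutators die) is precisely the ``main obstacle'' you identify and do not carry out. Your closing observation --- that the capped-off monodromy is the parabolic $t_{a'}^{-9}$, so the closed mapping torus is a Nil--manifold and the flattening to $T^3$ happens only through the binding fillings --- is accurate and a good sanity check on why the framing bookkeeping matters; the paper sidesteps all of it by working directly with the handle diagram.
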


\begin{proof} 
The positive factorization \,$t_{b_1} t_{b_2} t_{b_3}$ prescribes an allowable Lefschetz fibration whose boundary is the open book with monodromy $\psi$, which implies that the boundary contact $3$--manifold is Stein fillable. 

\begin{figure}[ht!] \label{calculus}
  \begin{center}
   \includegraphics[width=12cm]{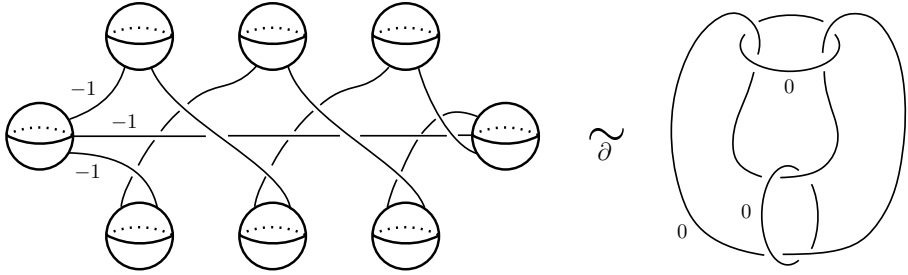}
    \end{center}
   \smallskip
   \caption{Kirby diagram for the Lefschetz fibration filling $Y$, whose boundary is diffeomorphic to $T^3$.}
   \end{figure}

The handlebody decomposition for this fibration yields the Kirby diagram in Figure~2. It is now a Kirby calculus exercise, which we will  leave the details of to the reader. Sliding two of the $(-1)$--framed $2$--handles over the third one we obtain two pairs of canceling $1$-- and $2$--handle pairs we can remove. The resulting diagram is easily seen to be the standard diagram for $D^2 \x T^2$ with two $1$--handles and one $2$--handle, whose boundary is $T^3$. Or instead, one can obtain a surgery diagram for the boundary \mbox{$3$--manifold} after trading the $1$--handles with $0$--framed unknots in the first picture, and perform the similar link calculus to arrive at the surgery diagram for $T^3$ given on the right hand side of Figure~2.

\enlargethispage{0.1in}
Now by Eliashberg \cite{Eliashberg}, the only contact structure on $T^3$ admitting a Stein filling is the canonical structure $\xi_{\rm{can}}$.
\end{proof}

\smallskip
\begin{remark}\label{elliptic}
The star relation prescribes a genus--$1$ elliptic fibration with three \mbox{$(-1)$--sections} \cite{KorkmazOzbagci}. The lemma  therefore implies that we can construct the Stein filling of $T^3$ as the complement of the union of an $I_9$ fiber (which one gets by clustering the nodal singularities induced by all the vanishing cycles $a_i$) and three $(-1)$--sections.
\end{remark}

Before we move on to our construction, we record the following result, which might be of particular interest. 

\begin{proposition} \label{planarfillings}
Let $P$ be a positive factorization of a mapping class \mbox{$\psi \in \Gamma_0^n$} into Dehn twists along homologically essential curves. Let $(X_P, J_P)$ denote the corresponding genus-$0$ allowable Lefschetz fibration. Then {$\{ H_i(X_P) \, | \,  \psi=P \}$} is a {finite set.} It follows that finitely many groups arise as homology groups of all possible Stein fillings (indeed, all minimal symplectic fillings) of a fixed contact \mbox{$3$--manifold} supported by a planar open book. 
\end{proposition}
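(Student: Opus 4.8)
The plan is to compute the integral homology of $X_P$ from its handle decomposition and then to split the finiteness claim into separate statements for $H_1$ and $H_2$: the first is controlled by the scarcity of homology classes of curves on a planar surface, the second by the bound on the Euler characteristic. First I would record the homotopy type. Since $X_P$ is obtained from $\Sigma_0^n \times D^2$ by attaching one $2$--handle along each vanishing cycle $c_j$, it is homotopy equivalent to the $2$--complex built from a wedge of $n-1$ circles (a deformation retract of $\Sigma_0^n$) by attaching $l$ two--cells along the elements $[c_j] \in \pi_1$. The associated cellular chain complex is
\[ 0 \to \Z^{l} \xrightarrow{\ \partial_2\ } \Z^{n-1} \xrightarrow{\ 0\ } \Z \to 0, \qquad \partial_2(e_j) = [c_j] \in H_1(\Sigma_0^n) \cong \Z^{n-1}. \]
From this I read off $H_0(X_P) \cong \Z$, $H_i(X_P) = 0$ for $i \geq 3$, and
\[ H_1(X_P) \cong \Z^{n-1}/\langle [c_1], \dots, [c_l]\rangle, \qquad H_2(X_P) \cong \ker \partial_2 \cong \Z^{\,l-r}, \]
where $r = \operatorname{rank}\langle [c_1], \dots, [c_l]\rangle$. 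In particular $H_2(X_P)$ is always free abelian, so the only data to pin down are the finite list of isomorphism types of $H_1$ and the finite list of ranks $l - r$.

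For $H_1$ I would use the genus-zero hypothesis decisively. Every essential simple closed curve on $\Sigma_0^n$ is separating and encircles some subset $S$ of the $n$ boundary components, so its homology class equals $\pm\sum_{i\in S}[\partial_i]$ and depends only on $S$. Hence there are at most $2^n$ distinct homology classes of essential curves on $\Sigma_0^n$; call this finite set $\mathcal C$. The subgroup $\langle [c_1], \dots, [c_l]\rangle$ is generated by a subset of the fixed finite set $\mathcal C$, so it is one of at most $2^{|\mathcal C|}$ subgroups of $\Z^{n-1}$, and therefore $H_1(X_P)$ takes only finitely many values. This is exactly where genus zero enters: on a surface of positive genus there are infinitely many homology classes of curves, and Lemma~\ref{keyexample} will exploit precisely this to make $H_1$ range over infinitely many groups.

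For $H_2$, the rank $r \leq n-1$ is automatically bounded, so it remains to bound $l$. Here I would invoke the fact recorded in the introduction \cite{P, Kaloti} that the Euler characteristic of the fillings of a fixed contact $3$--manifold of support genus zero is bounded: each $(X_P, J_P)$ is a Stein filling of the fixed contact $3$--manifold supported by $(\Sigma_0^n, \psi)$, and $\chi(X_P) = (2-n) + l$, so a bound on $\chi(X_P)$ bounds $l$. Consequently $l - r$ ranges over a finite set and $H_2(X_P) \cong \Z^{\,l-r}$ assumes only finitely many values. Combined with the previous paragraph, this shows that $\{ H_i(X_P) \mid \psi = P \}$ is finite.

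For the concluding assertion I would appeal to Wendl's theorem \cite{Wendl}: if $(Y,\xi)$ is supported by a planar open book, then every minimal strong symplectic filling is symplectic deformation equivalent to the total space of an allowable Lefschetz fibration supported by that same open book, hence arises from a positive factorization $P$ of its monodromy $\psi \in \Gamma_0^n$. Thus every minimal symplectic filling---in particular every Stein filling---is one of the $X_P$, and the finiteness just established applies. I expect the control of $l$ to be the only real obstacle: it is the single genuinely four--dimensional ingredient, while the rest is elementary $\Z$--linear algebra once one observes the planar scarcity of curve classes. I would therefore be careful to import the $\chi$--boundedness (equivalently, the boundedness of the number of positive Dehn twists) as a black box rather than attempt to reprove it here.
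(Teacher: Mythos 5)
Your proof is correct and follows essentially the same route as the paper's: read off the homology from the handle decomposition, bound $H_1$ by the finitely many homology classes ($\{0,\pm1\}$ coefficient vectors in the boundary basis) that essential curves on a planar surface can represent, bound $H_2$ via the Plamenevskaya--Kaloti bound on the number of Dehn twists in a positive factorization, and invoke Wendl (the paper also cites Niederkr\"uger--Wendl) to pass from positive factorizations to all minimal symplectic fillings. The only cosmetic difference is that you phrase the curve-class finiteness via subsets of boundary components and note explicitly that only the $H_2$ half needs the $\chi$-bound.
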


\begin{proof}
For $\psi = P$ in $\Gamma_0^n$, $X=X_P$ admits a handle decomposition with one \mbox{$0$--handle,} $n$ $1$--handles, and $\ell$ $2$--handles, where $\ell$ is equal to the number of Dehn twists in $P$. We thus have $\widetilde{H}_i(X ; \Z)=0$ for  $i \neq 1, 2$. 

From \cite{P}, proof of Theorem~2.2, (cf. \cite{Kaloti}, proof of Theorem~1.1), we see that for a planar mapping class element $\psi$, the number of Dehn twists in any positive factorization of $\psi$ (that is, $\ell$) is uniformly bounded. This shows that $H_2(X; \Z)$ has a finite number of possibilities. Further, the collection of curves $\{c_i\}$ in $P$ gives a presentation of $H_1(X; \Z)$ with $n$ generators and $\ell$ relations, each of the type 
\[m_{j,1} \, a_1  + \ldots + m_{j,n} \, a_n = 0 \]
Each relator is given by writing the homology class of the curve $c_j$ in the presentation $P$ in the basis $\{a_i\}$, where $a_i$ is the corresponding homology generator in $H_1(X; \Z)$ for each boundary component $\delta_i$, and more importantly where \linebreak $0 \leq | m_{j,i} | \leq 1$. Therefore there are only finitely many possible relations in a presentation of $H_1(X; \Z)$ for a Stein filling $X$ of $\xi$.

The remainder of the proposition follows from work of Wendl \cite {Wendl} and \linebreak Niederkr\"uger-Wendl \cite{NW} equating  minimal strong symplectic fillings with positive factorizations for planar contact manifolds. Namely, any such filling of a contact \mbox{$3$--manifold} with planar supporting open book admits an allowable Lefschetz fibration with planar fibers.
\end{proof}

\vspace{0.1in}
\section{Constructions of infinitely many fillings}

The contact $3$--manifold $(Y, \xi)$ we are interested is the one derived from $(T^3, \xi_{\rm{can}})$ by a Weinstein handle attachment along $b_2$, which by \cite{Gay} is supported by the open book with monodromy $\phi= \psi \, t_{b_2}= t_{b_1} t_{b_2} t_{b_3} t_{b_2}$. Moreover, by \cite{Wendl}, the Stein filling prescribed by the new positive factorization $ t_{b_1} t_{b_2} t_{b_3} t_{b_2}$ is obtained by a Weinstein handle attachment to the unique Stein filling $(D^2 \x T^2, J_{\rm{can}})$ of $(T^3, \xi_{\rm{can}})$. 

Next is the construction of Stein fillings of $(Y, \xi)$ with distinct homologies:

\begin{lemma} \label{keyexample}
Let $(Y, \xi)$ be the contact $3$--manifold supported by the genus--$1$ open book with three boundary components, whose monodromy is $\phi= t_{b_1} t_{b_2} t_{b_3} t_{b_2}$. 
For any integer $n$, the mapping class $\phi$ admits a positive  factorization 
\[P_n=   (\, t_{b_1} t_{b_2} t_{b_3}  \, t_{t_{a_1}^n(b_2)} \,)^{t_{a_1}^{-n}} =
 t_{t_{a_1}^{-n}(b_1)} t_{t_{a_1}^{-n}(b_2)} t_{t_{a_1}^{-n}(b_3)}  \, t_{b_2}  \, .\]
Let $(X_n, J_n)$ be the Stein filling which is the total space of the allowable Lefschetz fibration prescribed \mbox{by $P_n$.}  The family $ \{ \, (X_n, J_n) \, | \, n \in \N \}$ consists of distinct Stein fillings of the contact $3$--manifold $(Y, \xi)$, distinguished by their first homology \mbox{$H_1(X_n; \Z)=  \Z  \oplus \, \Z / n\Z$.}
\end{lemma}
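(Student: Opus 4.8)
The plan is to argue in four steps: (i) check that $P_n$ is genuinely a positive factorization of the \emph{same} $\phi$; (ii) conclude that each $P_n$ yields a Stein filling of the fixed $(Y,\xi)$; (iii) compute $H_1(X_n;\Z)$ from the Picard–Lefschetz action on the explicit vanishing cycles; and (iv) read off distinctness. For Step (i) the key algebraic observation I would record first is that $t_{a_1}$ commutes with $\psi=t_{b_1}t_{b_2}t_{b_3}$: by Lemma~\ref{keyrelation} we have $\psi = t_{a_1}^{-3}t_{a_2}^{-3}t_{a_3}^{-3}\Delta$ with $\Delta=t_{\delta_1}t_{\delta_2}t_{\delta_3}$ central, and $t_{a_1}$ commutes with $t_{a_1}^{-3}$, with $t_{a_2}^{-3}$ and $t_{a_3}^{-3}$ (the $a_i$ being disjoint), and with $\Delta$. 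Expanding the rightmost form of $P_n$ via $t_{f(c)}=f\,t_c\,f^{-1}$ with $f=t_{a_1}^{-n}$, the product of its first three twists collapses to $t_{a_1}^{-n}\psi\,t_{a_1}^{n}$, so that $P_n = t_{a_1}^{-n}\psi\,t_{a_1}^{n}\,t_{b_2} = \psi\,t_{b_2} = \phi$ by the commutation just noted. The equivalence of the two displayed forms of $P_n$ is then immediate from distributing conjugation over the product together with the identity $t_c^{\eta}=t_{\eta(c)}$ applied to $\eta=t_{a_1}^{-n}$.

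For Step (ii), the four vanishing cycles of $P_n$ are images of $b_1,b_2,b_3$ under homeomorphisms of the page $\Sigma_1^3$, hence nonseparating and so homologically essential; since the page has nonempty boundary, $P_n$ prescribes an allowable Lefschetz fibration whose total space $(X_n,J_n)$ is a Stein filling, as recalled in the Background. Because $P_n=\phi$, all of these fill the open book with monodromy $\phi$, i.e. the one fixed contact manifold $(Y,\xi)$.

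Step (iii) is the heart of the matter. Working in $H_1(\Sigma_1^3;\Z)\cong\Z^4$, I would first read off from Figure~1 that $\langle b_2,a_1\rangle=\pm1$ and that the three disjoint curves $a_i$, each meeting $b_2$ once, agree modulo the radical of the intersection form, so $[a_i]=[a_1]+(\text{boundary class})$. Using the Picard–Lefschetz formula $t_c^{-n}(x)=x-n\langle x,c\rangle c$ and that $\eta=t_{a_1}t_{a_2}t_{a_3}$ fixes $a_1$ (so $\langle b_1,a_1\rangle=\langle b_3,a_1\rangle=\langle b_2,a_1\rangle$), the four relations $[v_j]=0$ coming from the vanishing cycles reduce, after substituting $[b_2]=0$, to the three relations $[b_2]=0$, $[a_1]+[a_2]+[a_3]=0$, and $n[a_1]=0$. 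A short Smith-normal-form computation in the basis furnished by Figure~1 then gives $H_1(X_n;\Z)\cong\Z\oplus\Z/n\Z$, with the free $\Z$ carried by a boundary class and the torsion $\Z/n\Z$ generated by $[a_1]$; the case $n=0$ correctly recovers the base filling with $H_1\cong\Z^2$. Finally, Step (iv) is formal: the groups $\Z\oplus\Z/n\Z$ are pairwise non-isomorphic for distinct $n$, and $H_1$ is a homotopy invariant, so the $(X_n,J_n)$ are pairwise non-homotopy-equivalent, in particular distinct.

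The hard part will be Step (iii): one must correctly extract the homology classes and pairwise intersection numbers of the $a_i,b_i$ from Figure~1, and verify that the partial conjugation by $t_{a_1}^{n}$ acts only on the three twists $t_{b_i}$ and not on the trailing $t_{b_2}$, so that the relation coming from $t_{t_{a_1}^{-n}(b_2)}$ combines with the one from $t_{b_2}$ to force precisely $n[a_1]=0$. Equally important is confirming that the remaining relations leave the free rank equal to $1$ and do \emph{not} kill $[a_1]$, so that it genuinely survives as an element of order $n$; this is exactly the mechanism by which the ``partial conjugation'' changes the homology, and it is where a sign or intersection-number error would invalidate the claimed $\Z\oplus\Z/n\Z$.
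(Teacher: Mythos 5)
Your proposal is correct and follows essentially the same route as the paper: it verifies that $P_n$ is a factorization of $\phi$ by using Lemma~\ref{keyrelation} to see that $t_{b_1}t_{b_2}t_{b_3}$ commutes with $t_{a_1}^{n}$, and then computes $H_1(X_n;\Z)$ from the same four Picard--Lefschetz relations on the vanishing cycles $b_1,b_2,b_3,t_{a_1}^{\mp n}(b_2)$, arriving at $\Z\oplus\Z/n\Z$. The only (harmless) cosmetic differences are your choice of conjugation convention and your description of the free summand as carried by a boundary class rather than by $[a_2]$.
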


\begin{proof}
In the mapping class group $\Gamma_1^3$, we have
\begin{align*}
t_{b_1} t_{b_2} t_{b_3} \, t_{b_2}^{t_{a_1}^n}   = t_{a_1}^{-n} \, ( \,(t_{b_1} t_{b_2} t_{b_3})^{t_{a_1}^{-n}} \,  t_{b_2}  \, ) \,t_{a_1}^n =  t_{a_1}^{-n} \, (t_{b_1} t_{b_2} t_{b_3}  t_{b_2}  \, ) \,t_{a_1}^n   \, .
\end{align*}
Note that the mapping class $t_{b_1} t_{b_2} t_{b_3} = t_{a_1}^{-3} t_{a_2}^{-3} t_{a_3}^{-3} t_{\delta_1} t_{\delta_2} t_{\delta_3}$ commutes with $t_{a_1}^n$, and thus it stays fixed under the conjugation by $t_{a_1}^{-n}$. We conclude that each $P_n$ is a positive factorization of  $\phi = t_{b_1} t_{b_2} t_{b_3}  t_{b_2}$. 

We can calculate $H_1(X_n; \Z)$ using the positive factorization $P_n$, or the conjugate factorization  $P_n ^{t_{a_1}^{n}}$ (which prescribes an isomorphic Lefschetz fibration), that is
\[ H_1(X_n; \Z) \cong H_1(\Sigma_1^3 ; \Z) \, / N \]
where $N$ is normally generated by the vanishing cycles $b_1, b_2, b_3$ and $t_{a_1}^n(b_2)$. 

The right hand side is generated by the homology classes of the curves $a_1, a_2, a_3, b_2$ on $\Sigma_1^3$ (which we will denote by the same letters), with the relations 
\[  b_2+a_1+a_2+a_3=0 \, , \ b_2=0 \, , \ b_2 -a_1-a_2-a_3=0 \, ,   \ b_2 -n\,a_1=0  \]
induced by the vanishing cycles $b_1, b_2, b_3$ and {$t_{a_1}^{-n}(b_2)$}, where the last one is easily calculated by the Picard-Lefschetz formula. For $n \in \N$, from $b_2=0$, $a_3=-a_1-a_2$ and $n \, a_1=0$, we get $H_1(X_n; \Z) = \Z  \oplus \, \Z /  n\Z$, generated by $a_2$ and $a_1$.

Lastly, our claim on the support genus of $(Y, \xi)$ follows from Proposition~\ref{planarfillings}.
\end{proof}

\begin{remark}
The contact manifold $(Y_0, \xi_0)$ with monodromy $t_{b_1} t_{b_2} t_{b_3} t_{b_2}$ is obtained from that with monodromy $t_{b_1} t_{b_2} t_{b_3}$ by Legendrian surgery on $b_2$. As we noted, the monodromy $t_{b_1} t_{b_2} t_{b_3}$ yields $T^3$ and the embedding of this open book maps $b_2$ to a curve isotopic to $S^1 \times \{\text{pt}\}$ inside $T^3=S^1 \x T^2$. This curve is Legendrian and traverses the direction of the twisting of the contact planes, and has $tb = -1$. Surgery on $b_2$ yields a Seifert fibered $3$--manifold over $T^2$ with a single singular fiber of order $2$. 
\end{remark}

Next is our first main result, which will build on the example given in Lemma~\ref{keyexample}. We will then revamp our examples to an infinite family simply by combining them with Stein fillable planar open books. 

\begin{theorem} \label{mainthm1}
There are (infinitely many) contact $3$--manifolds with support genus one, each one of which admits infinitely many homotopy inequivalent Stein fillings, but do not admit arbitrarily large ones. These are all supported by genus--$1$ open books bounding genus--$1$ allowable Lefschetz fibrations on their Stein fillings with infinitely many distinct homology groups.
\end{theorem}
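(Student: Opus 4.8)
The plan is to assemble Theorem~\ref{mainthm1} from three ingredients already in hand: Lemma~\ref{keyexample} (which produces infinitely many Stein fillings $(X_n,J_n)$ of the fixed contact $3$--manifold $(Y,\xi)$, distinguished by $H_1(X_n;\Z)=\Z\oplus\Z/n\Z$), Proposition~\ref{planarfillings} (which certifies the support genus is exactly one rather than zero), and the theory of symplectic Calabi--Yau caps of \cite{LiMakYasui} (to bound the topology of \emph{all} fillings). First I would establish the three required properties for the single contact manifold $(Y,\xi)$ of Lemma~\ref{keyexample}, and only afterward promote this to an infinite family.

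\textbf{Infinitely many homotopy inequivalent Stein fillings.} The fillings $(X_n,J_n)$ already have pairwise non-isomorphic first homology groups $\Z\oplus\Z/n\Z$, so infinitely many of them are pairwise homotopy inequivalent (indeed not even homotopy equivalent, since homology is a homotopy invariant). I would simply extract an infinite subfamily, e.g. over distinct prime $n$, to make the inequivalence transparent. The distinctness of the homology groups also immediately yields the ``infinitely many distinct homology groups'' clause in the statement; each $X_n$ is the total space of the genus--$1$ allowable Lefschetz fibration prescribed by $P_n$, and its boundary open book is the fixed genus--$1$, three--boundary--component open book with monodromy $\phi=t_{b_1}t_{b_2}t_{b_3}t_{b_2}$, so all share the same genus--$1$ supporting open book.

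\textbf{Support genus exactly one.} By construction the open book supporting $(Y,\xi)$ has genus one, so the support genus is at most one. To rule out zero, I would invoke Proposition~\ref{planarfillings}: if $(Y,\xi)$ had a planar supporting open book, its minimal symplectic (hence Stein) fillings would realize only finitely many homology groups; but the $(X_n,J_n)$ realize infinitely many. Hence the support genus is exactly one. (This is precisely the ``Lastly'' sentence of the proof of Lemma~\ref{keyexample}, which I would here state as an explicit conclusion.)

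\textbf{No arbitrarily large fillings, and passing to an infinite family.} The substantive and hardest step is showing that $(Y,\xi)$ admits no arbitrarily large Stein (or minimal symplectic) fillings, i.e. that the Euler characteristics of all its fillings are bounded. Here I would follow the strategy flagged in the introduction: exhibit a \emph{symplectic Calabi--Yau cap} for $(Y,\xi)$ in the sense of \cite{LiMakYasui}. The key is that the fillings $(X_n,J_n)$ have very small topology (few $2$--handles, controlled $b_2$), and Remark~\ref{elliptic} identifies the Stein filling of $T^3$ as the complement of an $I_9$ fiber together with three $(-1)$--sections in a genus--$1$ elliptic (rational elliptic) surface. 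The complementary piece — a neighborhood of a cusp/Calabi--Yau configuration — should furnish a symplectic cap with trivial or torsion canonical class, to which the finiteness-of-Euler-characteristic theorem of \cite{LiMakYasui} applies. The main obstacle I anticipate is verifying the precise numerical hypotheses of the Calabi--Yau cap theorem (computing the canonical class of the cap, checking the cohomological conditions, and confirming the resulting bound $\chi(X)\le C$ applies to \emph{all} minimal symplectic fillings, not merely the $X_n$); this requires pinning down the gluing via the elliptic-fibration description rather than abstract nonsense. Finally, to obtain \emph{infinitely many} such contact $3$--manifolds, I would take boundary connected sums (or fiber sums of open books) of $(Y,\xi)$ with arbitrary Stein fillable \emph{planar} contact manifolds: the planar summand contributes only finitely many homology groups by Proposition~\ref{planarfillings} and admits a Calabi--Yau or concave cap without destroying the Euler-characteristic bound, while the resulting total open book remains genus one and still bounds infinitely many homologically distinct genus--$1$ allowable Lefschetz fibrations. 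Distinct choices of planar summand yield distinct contact manifolds, giving the asserted infinite family.
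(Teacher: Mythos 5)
Your overall architecture matches the paper's: Lemma~\ref{keyexample} for the infinitely many homologically distinct fillings, Proposition~\ref{planarfillings} to pin the support genus at one, a Calabi--Yau cap together with \cite{LiMakYasui} to bound the topology of \emph{all} fillings, and sums with planar contact manifolds to generate the infinite family. The genuine gap is in the cap you propose. The complement of the Stein filling of $T^3$ inside the rational elliptic surface $E(1)$ (the neighborhood of the $I_9$ fiber and the three $(-1)$--sections from Remark~\ref{elliptic}) is a cap for $(T^3,\xi_{\rm{can}})$, not for $(Y,\xi)$, and it is not Calabi--Yau: $c_1(E(1))$ is Poincar\'e dual to the fiber class and pairs nontrivially with the sections, so its restriction to that cap is non-torsion. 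The paper's actual move is to cap off the three binding components of the open book for $(Y,\xi)$, obtaining a symplectic cobordism to a torus fibration with monodromy word $t_{b'_1}t_{b'_2}t_{b'_3}t_{b'_2}$ (four twists), and then to complete this word to the $24$--twist positive factorization of the elliptic fibration on the $\K$ surface. The complementary piece in $\K$ is then automatically a Calabi--Yau cap because $c_1(\K)=0$, and \cite{LiMakYasui} bounds the Betti numbers of all Stein fillings. You flagged the verification of the cap's hypotheses as the obstacle, and that is exactly where your sketch, as written, does not go through: the extra Dehn twist $t_{b'_2}$ forces you out of $E(1)$ and into $\K$, and it is only there that the Calabi--Yau condition comes for free.

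A second, smaller soft spot is your treatment of the infinite family. For $Y_m=Y\#\,mY'$ you assert the Euler characteristic bound survives because the planar summand ``admits a Calabi--Yau or concave cap without destroying the bound''; caps do not obviously glue across connected sums this way. The paper instead invokes Eliashberg's theorem that any Stein filling of a connected sum splits as a boundary connected sum of fillings of the summands; taking $(Y',\xi')$ to be the standard $S^1\times S^2$, whose unique Stein filling is $S^1\times D^3$, an arbitrarily large filling of $Y_m$ would force an arbitrarily large filling of $Y$, a contradiction. You should replace your gluing heuristic with this splitting argument (and use Murasugi sums of the open books, which keep the page genus equal to one and extend to the filling Lefschetz fibrations). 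Everything else in your proposal --- homotopy inequivalence via $H_1$, support genus exactly one via Proposition~\ref{planarfillings}, distinguishing the $Y_m$ by $\pi_1$ --- is correct and agrees with the paper.
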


\begin{proof} 
We will construct a countable family of such examples, $(Y_m, \xi_m)$, 
\mbox{$m \in \N$}, each admitting a countably family of distinct Stein fillings $(X_{n,m}, J_{n,m})$,  $n\in \N$. All will be constructed from our principal example by taking a connected sum with some planar contact 3-manifold.

Let us begin with $(Y_0, \xi_0)= (Y, \xi)$. Capping all three boundaries of the genus--$1$ open book on $(Y, \xi)$ with monodromy $\phi$ we get a symplectic cobordism from $(Y, \xi)$ to a symplectic $T^2$--fibration \cite{Eliashberg3} with the induced monodromy $t_{b'_1} t_{b'_2} t_{b'_3} t_{b'_2}$, where $b'_i$ are the images of the curves $b_i$ on the closed genus--$1$ surface. By Remark~\ref{elliptic}, we already know that $t_{b'_1} t_{b'_2} t_{b'_3}$ can be completed to a positive factorization of the elliptic fibration on $E(1)$. So the factorization $t_{b'_1} t_{b'_2} t_{b'_3} t_{b'_2}$ can be completed to a positive factorization of an elliptic fibration on  the $\K$ surface. (Note that this means that we can embed all our Stein fillings $(X_n, J_n)$ into $\K$ surface.) That is, the contact $3$--manifold $(Y, \xi)$ admits a \emph{Calabi-Yau cap}, which by \cite{LiMakYasui} implies that the Betti numbers of Stein fillings of $(Y, \xi)$ is finite. In particular, it cannot admit arbitrarily large Stein fillings. 

Now let $(Y', \xi')$, where $Y' \neq S^3$, be any  contact $3$--manifold which admits a planar supporting open book. We simply take $(Y', \xi')$ to be $S^1 \x S^2$ with the canonical contact structure supported by an annulus open book with trivial monodromy $\phi'$, which is filled by the trivial allowable Lefschetz fibration with annulus fibers on $(X', J')$, the unique Stein filling $(S^1 \x D^3, J')$ of $(Y', \xi)$ \cite{Eliashberg2}.

By taking certain Murasugi sums of the open book $\phi$ with $m$ copies of $\phi'$, we can get a new genus--$1$ open book $\phi_m$ supporting a contact $3$--manifold $(Y_m, \xi_m)$, where we set $Y_m=Y \# \, m Y'$. By our assumption on $Y'$, we see that $\pi(Y_m)$ are all different;  for $Y'= S^1 \x S^2$, we have $\pi_1(Y_m )=(\Z^2) * (\Z^k)$. So $\{(Y_m, \xi_m) \, | \, m \in \N\}$ consists of infinitely many distinct elements. 

This Murasugi sum can be extended to the filling allowable Lefschetz fibrations, giving us a family of genus--$1$ allowable Lefschetz fibrations on the Stein fillings $(X_{n,m}, J_{n,m})$ of each $(Y_m, \xi_m)$, for $n \in \N$. Since 
\[H_1(X_{n,m})= H_1(X_n) \oplus H_1(X') = \Z^2 \oplus \Z_n \, ,\] 
these fillings are all distinct. Moreover, once again by Proposition~\ref{planarfillings}, all $(Y_m, \xi_m)$ have support genus $1$. However, $(Y_m, \xi_m)$ does not admit arbitrarily large Stein fillings: if it did, by \cite{Eliashberg2}  these would split as boundary connected sums of Stein fillings of $(Y_m, \xi_m)$ and copies of $(Y', \xi')$, respectively. By our pick of $(Y', \xi')$, this means $(Y_m, \xi_m)$ admits arbitrarily large Stein fillings, contradicting the observation we have made above. 
\end{proof}

\smallskip
\begin{remark} \label{YasuiExotic}
The first examples of contact $3$--manifolds with support genus one admitting infinitely many Stein fillings were given by Yasui in \cite{Yasui}. Yasui uses the same open book description for the standard contact structure on $T^3$ from the second author's thesis \cite{VHMthesis} to perform \emph{logarithmic transforms} and produce infinitely many pairwise exotic Stein fillings, as well as homologically distinct ones. These also admit genus--$1$ allowable Lefschetz fibrations, however their monodromies are different than ours and have many more Dehn twists. Unlike the small fillings from Lemma~\ref{keyexample} we tailored for the proof of the above theorem, most examples with larger topology (e.g. the fillings with distinct homologies in \cite{Yasui}) do not seem to embed into the $\K$ surface. It seems plausible that some of the applications in \cite{Yasui} can be modified to produce similar examples to ours, however the rather subtle partial conjugation employed in our construction of distinct positive factorizations arguably yields the most direct and explicit proof.
\end{remark}

\begin{remark}[Uniruled and Calabi-Yau caps]
As illustrated by our examples, admitting a Calabi-Yau cap \cite{LiMakYasui} does \emph{not} impose finiteness on possible homology groups of Stein or minimal symplectic fillings of a contact $3$--manifold. However, admitting a \emph{uniruled cap}  in the sense of \cite{LiMakYasui} might be sufficient for this, which would extend the case of contact $3$--manifolds with support genus zero we covered in Proposition~\ref{planarfillings}. 
\end{remark}

\smallskip
Our second main result will also build on a small variation of Lemma~\ref{keyexample}, this time to provide infinitely many fillings of quasipositive braids.

\begin{theorem} \label{mainthm2}
There are (infinitely many) elements in the $4$--strand braid group, each of which admits infinitely many quasipositive braid factorizations up to Hurwitz equivalence. A particular family gives infinitely many complex analytic annuli in the $4$--ball with different fundamental group complements, all filling the same \mbox{$2$-component} transverse link.
\end{theorem}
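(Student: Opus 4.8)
The plan is to descend the genus--$1$ construction from Lemma~\ref{keyexample} to the $4$--braid group via a suitable double-branched covering, mirroring the way positive factorizations on a surface descend to quasipositive factorizations on a disk with marked points. First I would replace the torus with three boundary components by a torus with \emph{two} boundary components $\Sigma_1^2$, and exhibit a hyperelliptic involution $\iota$ of $\Sigma_1^2$ that interchanges the two boundary circles; the quotient $\Sigma_1^2 / \iota$ is a disk with four marked points (the images of the fixed points of $\iota$), so that $\Gamma_1^2$ maps onto the centralizer of $\iota$, whose $\iota$--equivariant elements descend to $\mathcal{B}_4$. Under this covering, an $\iota$--symmetric Dehn twist $t_c$ descends to a half-twist $\tau_\alpha$ along an arc $\alpha$ connecting two of the four marked points, so a positive factorization that is symmetric under $\iota$ yields a quasipositive factorization of the corresponding $4$--braid.

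Next I would produce the $\iota$--symmetric analogue of the partial-conjugation family $P_n$. The key is to arrange the vanishing cycles $b_1, b_2, b_3$ and the conjugating curve $a_1$ from Lemma~\ref{keyrelation} so that they are invariant (or swapped in pairs) under $\iota$ on $\Sigma_1^2$. Since conjugation and partial conjugation preserve $\iota$--symmetry when the conjugating element is itself symmetric, the infinitely many factorizations $P_n = (\, t_{b_1} t_{b_2} t_{b_3} \, t_{t_{a_1}^n(b_2)} \,)^{t_{a_1}^{-n}}$ will all descend to quasipositive factorizations of a \emph{single} $4$--braid $\beta$, namely the image of $\phi = t_{b_1} t_{b_2} t_{b_3} t_{b_2}$ under the covering. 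The resulting braided surfaces $S_n$ are the double branched covers of $D^4$ branched along the fillings $X_n$, or more directly the surfaces prescribed by the $4$--braid factorizations; their boundaries are all the same transverse link (the closure of $\beta$ as a transverse braid), and by Rudolph's theorem each can be made complex analytic.

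To distinguish the Hurwitz classes and identify the topology, I would compute $\pi_1(D^4 \setminus S_n)$ from the quasipositive factorization via the Wirtinger-type presentation in which each half-twist contributes a band relation; this fundamental group should see the same $\Z \oplus \Z/n\Z$ obstruction that distinguished $H_1(X_n)$ in Lemma~\ref{keyexample}, so that the $S_n$ fall into infinitely many distinct Hurwitz classes. For the ``annuli'' refinement, I would select the sub-family in which the branched cover computes Euler characteristic zero with two boundary components --- this is where the frugality of having only four Dehn twists is essential, since $\chi(S) = n - (\text{branch points})$ forces the surface to be an annulus only when the number of half-twists exactly balances the marked points. The main obstacle I anticipate is verifying the $\iota$--equivariance concretely: one must check that the specific curves of the star relation can be isotoped into symmetric position on $\Sigma_1^2$ and that the partial conjugation by $t_{a_1}^{-n}$ respects the involution, since a careless choice would break symmetry and fail to descend to $\mathcal{B}_4$ at all. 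Confirming that the descended braids genuinely have four strands (and not fewer after reduction) and that the Hurwitz inequivalence survives the quotient will require the explicit fundamental-group computation rather than a formal argument.
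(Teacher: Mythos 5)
Your overall strategy coincides with the paper's: pass to $\Sigma_1^2$ (the paper does this by capping off the boundary component $\delta_3$ of the open book from Lemma~\ref{keyexample}), check that the four twist curves are symmetric under the hyperelliptic involution with four fixed points that exchanges the two boundary circles, descend the partially conjugated factorizations to quasipositive factorizations of a single $4$--braid, and invoke Rudolph for analyticity. (The paper conjugates by powers of $t_{\hat a_2}$ rather than $t_{\hat a_1}$, but either symmetric choice yields infinitely many homology groups for the covers.) There are, however, two places where your plan as written would not go through.

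First, the distinguishing step. You propose to compute $\pi_1(D^4\setminus S_n)$ by a Wirtinger-type presentation and to ``see'' $\Z\oplus\Z/n\Z$ there. But $H_1(D^4\setminus S_n)$ is free abelian of rank equal to the number of components of $S_n$, hence independent of $n$ within each family, so no abelian invariant of the complement detects $n$; moreover the relevant group after capping is $\Z/n\Z$, not $\Z\oplus\Z/n\Z$ (the homology computation of Lemma~\ref{keyexample} must be redone on $\Sigma_1^2$, where $\hat a_1=\hat a_3$). The paper runs the implication in the opposite direction from yours: the double branched covers of $D^4$ along the $S_n$ are the fillings $\widehat X_n$ (not the other way around, as you wrote at one point), these have $H_1(\widehat X_n)=\Z/n\Z$, and since the branch surfaces are pairwise homeomorphic, distinct covers force non-isomorphic $\pi_1$ of the complements, hence distinct Hurwitz classes. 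No direct Wirtinger computation is needed, and it is unclear how you would conclude from one.

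Second, the annuli. Having $\chi(S_n)=0$ and two boundary circles does not make $S_n$ an annulus unless $S_n$ is \emph{connected}, and connectivity depends on the parity of $n$: for $n$ odd the four bands connect all four sheets and $S_n$ is an annulus, while for $n$ even $S_n$ is a disjoint union of a disk and a once-punctured torus. You must restrict to odd $n$. Finally, the theorem asserts infinitely many such braids, while your construction produces only one; the paper obtains the rest by appending extra $t_{\hat b_2}$ factors, which changes the braid and the topology of $S_n$ but leaves the set of vanishing cycles, hence the homology of the covers and the distinguishing argument, intact.
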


\begin{proof}
Recall the positive factorizations for the open book monodromy we had in Lemma~\ref{keyexample}.
Capping off the boundary component $\delta_3$ of $\Sigma_1^3$, we obtain another genus--$1$ open book  $\widehat{\phi}$ on a new contact $3$--manifold $(\widehat{Y}, \hat{\xi})$. It bounds allowable Lefschetz fibrations on $\widehat{X}_n$  with $4$ singular fibers, coming from positive factorizations 
\[\widehat{P}_n= (\, t_{\hat{b}_1} t_{\hat{b}_2} t_{\hat{b}_3} \, t_{t_{\hat{a}_2}^n(\hat{b}_2)} \,)^{t_{\hat{a}_2}^{-n}} \]
of $\widehat{\phi}$ in  $\Gamma_1^2$. Here $\hat{a}_i, \hat{b}_i, \delta_i$ denote the images of $a_i$, $b_i, \delta_i$ in the new page $\Sigma_1^2$; in particular, $\hat{a}_1=\hat{a}_3$. 

\begin{figure}[ht!] \label{curves}
  \begin{center}
   \includegraphics[width=13cm]{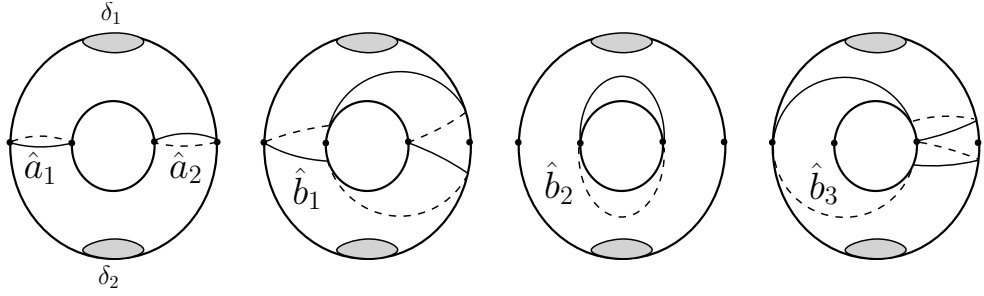}
    \end{center}

   \caption{Dehn twist curves $\hat{a}_i$, $\hat{b}_i$, $\delta_i$ on $\Sigma_1^2$, symmetric under the hyperelliptic involution with $4$ fixed points.}
   \end{figure}

The same homology arguments show that $\widehat{X}_n$ can be distinguished by their first homology,  $H_1(\widehat{X}_n; \Z)= \Z \, / n \, \Z$. For,  $H_1(\widehat{X}_n; \Z)$ is now generated by the homology classes of $\hat{a}_1, \hat{a}_2, \hat{b}_2$, with the relations 
\[  \hat{b}_2+\hat{a}_1+2\hat{a}_2=0 \, , \ \hat{b}_2=0 \, , \ \hat{b}_2 - \hat{a}_1- 2\hat{a}_2=0 \, ,   \ \hat{b}_2 -n\,\hat{a}_2=0  \]
induced by the vanishing cycles $\hat{b}_1, \hat{b}_2, \hat{b}_3$ and {$t_{\hat{a}_2}^{-n}(\hat{b}_2)$}. So $\hat{a}_2$ generates the whole group and is of order $n$.

One perk of the factorizations above is that they commute with a hyperelliptic involution $\iota$ on $\Sigma_1^2$, so each positive factorization $\widetilde{P}_n$ can be viewed as a degree four quasipositive factorization of the same element in the $4$-strand braid group $\mathcal{B}_4$. Topologically, the allowable  Lefschetz fibrations on $\widetilde{X}_n$ arise as double branched coverings along  \emph{positively braided surfaces} $S_n$ in the $4$--ball, all filling the same \mbox{$4$--strand} braid on the boundary. By the work of Rudolph, all $S_n$ are complex analytic curves \cite{Rudolph}. 

We easily see that for $n$ odd, the half-twisted bands connect all four horizontal disks in the band presentation, whereas for $n$ even there are two connected components. The Euler characteristic $\chi(S_n)= 4 - 4 = 0$ (the number of sheets minus the number of bands) so, $S_n$ is an annulus when $n$ is odd and is a link of punctured torus and a disk when $n$ is even. For either family of branched covers parametrized by odd $n$ or even $n$, since the double branched covers $\widetilde{X}_n$, which are all branched along homeomorphic surfaces $S_n$ in $D^4$ have different homologies, we necessarily have distinct $\pi_1(D^n \setminus S_n)$.

Lastly, we note that we could  get more elements in $\mathcal{B}_4$ with infinitely many factorizations with distinct $\pi_1$ complements by simply adding more  $t_{\hat{b}_2}$ factors to $\widehat{P}_n$. This would have no effect on the homology calculation in the cover since, the collection of vanishing cycles will be the same. It will however change the topology of $S_n$ and yield higher genera knotted complex surfaces filling in these links.
\end{proof}

\smallskip
\begin{remark}[Distinct positive factorizations in $\Gamma_1^k$] \label{AurouxFact}
The examples given in the proofs of Theorems~\ref{mainthm1} and~\ref{mainthm2} demonstrate that there are genus--$1$ mapping classes in $\Gamma_1^k$ for each $k \geq 2$, each one of which admits infinitely many inequivalent positive factorizations with four positive Dehn twists, all with distinct homologies. If we cap off one more boundary component, our induced factorizations in $\Gamma_1^1$ yield two distinct fundamental groups; $\pi_1=1$ or $\Z_3$. These complement the work of Auroux in \cite{Auroux}, where pairs of distinct factorizations into three or four positive Dehn twists, some with distinct homologies, were detected for a few other mapping classes in $\Gamma_1^1$.
\end{remark}

\begin{remark}[Quasipositive factorizations for low order braids] \label{GengFact}
There are examples of low order braids filled by different braided surfaces in the literature. The articles \cite{AurouxEtal} and \cite{Auroux} give examples of pairs of braided surfaces with distinct fundamental group complements, filling the same $3$-braids. These fillings have distinct topology ---one surface is connected and the other one isn't. Examples of pairs of \emph{connected} braided surfaces filling a $4$-braid were obtained by Geng in \cite{Geng}. Given Theorem~\ref{mainthm2}, one should ask whether an infinite family of braided surfaces with distinct fundamental group complements can possibly fill a $3$-braid. 
\end{remark}

Orevkov has recently shown that any $3$--braid admits at most  finitely many quasipositive factorizations up to Hurwitz equivalence \cite{O1}[Corollary~2]. Since $\Gamma_1^1$ is isomorphic to $\mathcal{B}_3$, where positive Dehn twists correspond to positive half-twists, any mapping class in $\Gamma_1^1$ also admits at most finitely many positive factorizations. (Also see \cite{CadavidEtal}.) We conclude that our examples of genus--$1$ mapping classes and braids with infinitely many positive and quasipositivefactorizations realize the smallest possible number of boundary components and number of strands, respectively.

\begin{remark} [Fillings of planar spinal open books]
\label{spinal}
An open question regarding contact $3$--manifolds supported by planar open books is if they admit at most finitely many Stein fillings. Here we can deduce a \emph{negative} answer for contact $3$--manifolds supported by planar \emph{spinal open books}, even though it is shown in \cite{LVW} that these behave very similar to planar open books in general. For example, their minimal symplectic fillings also admit planar Lefschetz fibrations over more general compact surfaces (the topology of which are determined by the spine of the open book), and they all turn out to be Stein fillings. (The reader can turn to the Appendix of \cite{BV1} for an overview of spinal open books introduced in \cite{LVW}.) In turn, our homology arguments in \cite{BMV}[Proposition~1] work the same for the induced positive factorizations, since the commutators in these factorizations die in the homology. We therefore conclude that contact $3$--manifolds supported by planar spinal open books, just like the planar case, will not admit arbitrarily large Stein fillings.

Here is a sketch of the construction of these examples: consider the simplest monodromy we had for the annuli fillings in Theorem~\ref{mainthm2}. If we take the 3rd power of it, we get a pure $4$--braid, whereas the 3rd power of the positive factorizations will still be distinct. The latter is because the collection of Dehn twists is the same in double branched covers, so the fillings can still be distinguished by the same homology calculation. If we now drill out a fibered tubular neighborhood of the new braided surfaces, the boundary is a planar spinal open book, whereas the interior is the complement of a complex analytic genus $3$ curve (filling the pure braid), which will yield a minimal symplectic filling. By \cite{LVW}, these are all Stein fillings. 
\end{remark}

\vspace{0.1in}

\end{document}